\definecolor{lstbgcolor}{rgb}{0.9,0.9,0.9} 
\newtheorem{theorem}{Theorem}
\newtheorem{definition}[theorem]{Definition}
\begin{document}

\title[THE FAITHFULNESS OF AN EXTENSION OF LAWRENCE-KRAMMER REPRESENTATION]{THE FAITHFULNESS OF AN EXTENSION OF LAWRENCE-KRAMMER REPRESENTATION ON THE GROUP OF CONJUGATING AUTOMORPHISMS $C_n$ in the cases $n=3$ and $n=4$}

\author{Mohamad N. Nasser}

\address{Mohamad N. Nasser\\
         Department of Mathematics\\
         Beirut Arab University\\
         P.O. Box 11-5020, Beirut, Lebanon}
         
\email{m.nasser@bau.edu.lb}

\maketitle

\begin{abstract}
Let $C_n$ be the group of conjugating automorphisms. Valerij G. Bardakov defined a representation $\rho$ of $C_n$, which is an extension of Lawrence-Krammer representation of the braid group $B_n$. Bardakov proved that the representation $\rho$ is unfaithful for $n \geq 5$. The cases $n=3,4$ remain open. M. N. Nasser and M. N. Abdulrahim made attempts towards the faithfulness of $\rho$ in the case $n=3$. In this work, we prove that $\rho$ is unfaithful in the both cases $n=3$ and $n=4$.
\end{abstract}

\medskip

\renewcommand{\thefootnote}{}
\footnote{\textit{Key words and phrases.}  Braid group, Free group, Lawrence-Krammer representation, Burau representation, faithfulness.}
\footnote{\textit{Mathematics Subject Classification.} Primary: 20F36.}

\vskip 0.1in

\section{Introduction} 

The braid group on $n$ strings, $B_n$, is the abstract group with generators $\sigma_1,\ldots,\sigma_{n-1}$ and a presentation as follows:
\begin{align*}
&\sigma_i\sigma_{i+1}\sigma_i = \sigma_{i+1}\sigma_i\sigma_{i+1} ,\hspace{0.5cm} i=1,2,\ldots,n-2,\\
&\sigma_i\sigma_j = \sigma_j\sigma_i , \hspace{2.1cm} |i-j|\geq 2.
\end{align*}

Let $\mathbb{F}_n$ be a free group of $n$ generators $x_1,x_2,\ldots,x_n$. The group of conjugating automorphisms, $C_n$, is one of the generalizations of the braid group $B_n$ [1]. The group $C_n$ is defined to be the subgroup of $Aut(\mathbb{F}_n)$ that satisfies for any $\Phi \in C_n$, $\Phi(x_i)={f_i}^{-1}x_{\Pi(i)}f_i$, where $\Pi$ is a permutation on $\{1,2,\ldots,n\}$ and $f_i=f_i(x_1,x_2,\ldots,x_n)$.

Lawrence-Krammer representation is one of the most famous linear representations of the braid group $B_n$ [4]. Braid groups are linear due to Lawrence-Krammer representations. It was shown that Lawrence-Krammer representations are faithful for all $n\in \mathbb{N}$ [2]. In [1], Bardakov uses Magnus representation defined in [3] to construct a linear representation $\rho: C_n \mapsto GL(V_n)$, where $V_n$ is a free module of dimension $n(n-1)/2$ with a basis $\{v_{i,j}\}, 1\leq i<j\leq n$. This representation is shown to be an extension of Lawrence-Krammer representation of $B_n$.

Valerij G. Bardakov showed that the representation $\rho$ is unfaithful for $n\geq 5$ [1]. Moreover, M. N. Nasser and M. N. Abdulrahim proved that $\rho$ is unfaithful under some choices of $q$ in the case $n=3$ [5]. In addition, they found the shape of the all possible elements in $\ker\rho$ when $q^{6k}\neq 1$ for all $k\in \mathbb{Z}$. However, the question of faithfulness of $\rho$ is still open in the both cases $n=3$ and $n=4$. 

In section 3 of our work, we prove that the representation $\rho$ is unfaithful in the both cases $n=3$ and $n=4$ (Theorem 3 and Theorem 5).

\section{Preliminaries} 

The group of conjugating automorphisms, $C_n$, is the subgroup of $Aut(\mathbb{F}_n)$ that satisfies for any $\Phi \in C_n$, $\Phi(x_i)={f_i}^{-1}x_{\Pi(i)}f_i$, where $\Pi$ is a permutation on $\{1,2,\ldots ,n\}$ and $f_i=f_i(x_1,x_2,\ldots ,x_n)$. Here $\mathbb{F}_n$ is the free group of $n$ generators $x_1,x_2,\ldots ,x_n$.

A. G. Savushkina [6] proved that $C_n$ is generated by automorphisms $\sigma_1,\sigma_2,\ldots ,\sigma_{n-1},\\ \alpha_1,\alpha_2,\ldots ,\alpha_{n-1}$ of the free group $\mathbb{F}_n$, where $\sigma_1,\sigma_2,\ldots ,\sigma_{n-1}$ generate the braid group $B_n$, and $\alpha_1,\alpha_2,\ldots ,\alpha_{n-1}$ generate the symmetric group $S_n$.

In [1], we see that the group $C_n$ is defined by the relations:
\begin{align*}
\sigma_i\sigma_{i+1}\sigma_i = \sigma_{i+1}\sigma_i\sigma_{i+1} ,\hspace{.3cm} &\text{for} \hspace{.3cm} i=1,2,\ldots ,n-2,\vspace{0.1cm}\\ 
\sigma_i\sigma_j = \sigma_j\sigma_i , \hspace{.3cm} &\text{for} \hspace{.3cm} |i-j|>2, \vspace{0.1cm}\\
\alpha^{2}_i=1, \hspace{.3cm} &\text{for} \hspace{.3cm} i=1,2,\ldots ,n-1,  \vspace{0.1cm}\\
\alpha_j\alpha_{j+1}\alpha_j=\alpha_{j+1}\alpha_{j}\alpha_{j+1}, \hspace{.3cm} &\text{for} \hspace{.3cm} j=1,2,\ldots ,n-2,  \vspace{0.1cm}\\
\alpha_i\alpha_j=\alpha_j\alpha_i, \hspace{.3cm} &\text{for} \hspace{.3cm} |i-j|\geq 2,  \vspace{0.1cm}\\
\alpha_i\sigma_j=\sigma_j\alpha_i, \hspace{.3cm} &\text{for} \hspace{.3cm} |i-j|\geq 2, \vspace{0.1cm}\\
\sigma_i\alpha_{i+1}\alpha_i=\alpha_{i+1}\alpha_i\sigma_{i+1}, \hspace{.3cm} &\text{for} \hspace{.3cm} i=1,2,\ldots ,n-2, \vspace{0.1cm}\\
\sigma_{i+1}\sigma_{i}\alpha_{i+1}=\alpha_{i}\sigma_{i+1}\sigma_{i}, \hspace{.3cm} &\text{for} \hspace{.3cm} i=1,2,....,n-2.
\end{align*}

\begin{definition}
\text{[4]}
Let $V_n$ be a free module of dimension $n(n-1)/2$ and a basis $\{v_{i,j}\}, 1\leq i< j\leq n$ over the ring $\mathbb{Z}[q^{\pm1}]$ of Laurent polynomials in one variable. We introduce the representation $\rho:C_n \mapsto GL(V_n)$ by the actions of $\sigma_i's$ and $\alpha_i's$, $i=1, \ldots, n-1$ on the basis of the module $V_n$ as follows:
\begin{align*}
\left\{\begin{array}{l}
\sigma_i(v_{k,i})=(1-q)v_{k,i}+qv_{k,{i+1}}+q(q-1)v_{i,i+1},\\
\sigma_i(v_{k,{i+1}})=v_{k,i}$, \hspace{0.5cm} $k<i,\\
\sigma_i(v_{i,{i+1}})=q^2v_{i,{i+1}},\\
\sigma_i(v_{i,l})=q(q-1)v_{i,{i+1}}+(1-q)v_{i,l}+qv_{i+1,l},$ \hspace{0.5cm}$ i+1<l,\\
\sigma_i(v_{i+1,l})=v_{i,l},\\
\sigma_i(v_{k,l})=v_{k,l}$, \hspace{0.5cm} $\{ k,l\} \cap \{i,i+1\}=\emptyset, \\
\alpha_i(v_{k,i})=v_{k,{i+1}},\\
\alpha_i(v_{k,{i+1}})=v_{k,i}$, \hspace{0.5cm} $k<i,\\
\alpha_i(v_{i,{i+1}})=v_{i,{i+1}},\\
\alpha_i(v_{i,l})=v_{i+1,l},$ \hspace{0.5cm}$ i+1<l,\\
\alpha_i(v_{i+1,l})=v_{i,l},\\
\alpha_i(v_{k,l})=v_{k,l}$, \hspace{0.5cm} $\{ k,l\} \cap \{i,i+1\}=\emptyset. \\
\end{array}\right.
\end{align*}
\end{definition}
\vspace{0.2cm}

\section{The representation $\rho$ is unfaithful for $n=3$ and $n=4$} 

It was proven that Lawrence-Krammer representation of $B_n$ is faithful for all $n \in \mathbb{N}$ [2]. The representation $\rho:C_n \mapsto GL_{\frac{n(n-1)}{2}}(\mathbb{Z}[q^{\pm1}])$ is an extension to $C_n$ of Lawrence-Krammer representations of $B_n$ [1]. Bardakov proved that $\rho$ is unfaithful for $n\geq 5$ [1]. M. N. Nasser and M. N. Abdulrahim proved, under special choices of $q$, that $\rho$ is unfaithful in the case $n=3$ [5]. Moreover, they proved that if $q^{6k}\neq 1$ for all $k \in \mathbb{Z}$ then the possible words in $\ker \rho$ are $A_1T^{s_1}A_2T^{s_2}\ldots A_{r-1}T^{s_{r-1}}A_rT^{s_r}$ and $T^{s_1}A_1T^{s_2}A_2\ldots T^{s_{r-1}}A_{r-1}T^{s_r}A_r$, where $T=\sigma_2\alpha_2\alpha_1, r\in \mathbb{N}, s_i \in \mathbb{Z}$ for all $1\leq i \leq r, \displaystyle \sum_{i=1}^{r}s_i =0, \displaystyle \sum_{i=1}^{r} length (A_i)$ is even and $A_i \in \{\alpha_1, \alpha_2, \alpha_1\alpha_2, \alpha_2\alpha_1, \alpha_1\alpha_2\alpha_1\}$ for all $1 \leq i \leq r$. The question of faithfulness of the representation $\rho$ is still open for $n=3$ and $n=4$. We answer the question for the complex specialization of the representation $\rho$ in the both case $n=3$ and $n=4$ by showing that $\rho$ is unfaithful.

\vspace{.5cm}
In what follows we consider the representation $\rho$ for $n=3$.

\begin{definition}
Consider the representation $\;\rho:C_3 \mapsto GL_3(\mathbb{Z}[q^{\pm1}])$ and specialize $q$ to a non zero complex number. The complex specialization of $\rho $ is defined by the actions of $\sigma_1$, $\sigma_2$, $\alpha_1$ and $\alpha_2$ on the standard unit vectors  $\{e_1,e_2,e_3\}$ of $\mathbb C^3$ as follows:\vspace{0.3cm} \\
$$\sigma_1 \mapsto \left\{\begin{array}{l}
e_1\mapsto q^2e_1\\
e_2\mapsto q(q-1)e_1+(1-q)e_2+qe_3\\
e_3\mapsto e_2
\end{array}\right., \hspace{0.2cm} \sigma_2 \mapsto \left\{\begin{array}{l}
e_1\mapsto (1-q)e_1+qe_2+q(q-1)e_3\\
e_2\mapsto e_1\\
e_3\mapsto q^2e_3
\end{array}\right.,$$
\\
$$\alpha_1 \mapsto \left\{\begin{array}{l}
e_1\mapsto e_1\\
e_2\mapsto e_3\\
e_3\mapsto e_2
\end{array}\right.
\hspace{0.2cm} \text{and} \hspace{0.3cm} \alpha_2 \mapsto \left\{\begin{array}{l}
e_1\mapsto e_2\\
e_2\mapsto e_1\\
e_3\mapsto e_3
\end{array}\right..$$

\vspace{0.5cm}
In other words, for $n=3$, the representation $\rho$ is given by 
$$\rho:C_3 \mapsto GL(\mathbb{C}^3)$$
$$\rho(\sigma_1)=
\left( \begin{array}{@{}c@{}}
\begin{matrix}
   		q^2 & q(q-1) & 0 \\
    	0 & 1-q & 1 \\
        0 & q & 0 \\
\end{matrix}
\end{array} \right), \hspace{0.5cm}
\rho(\sigma_2)=
\left( \begin{array}{@{}c@{}}
\begin{matrix}
   		1-q & 1 & 0 \\
    	q & 0 & 0 \\
        q(q-1) & 0 & q^2 \\
\end{matrix}
\end{array} \right),$$
\\
$$\rho(\alpha_1)=
\left( \begin{array}{@{}c@{}}
\begin{matrix}
   		1 & 0 & 0 \\
    	0 & 0 & 1 \\
        0 & 1 & 0 \\
\end{matrix}
\end{array} \right) \hspace{0.2cm}\text{and} \hspace{0.5cm}
\rho(\alpha_2)=
\left( \begin{array}{@{}c@{}}
\begin{matrix}
   		0 & 1 & 0 \\
    	1 & 0 & 0 \\
        0 & 0 & 1 \\
\end{matrix}
\end{array} \right).$$
\end{definition}

\vspace*{0.25cm}

Notice that $q\neq 1$ since otherwise we get $\alpha_1=\sigma_1$ and $\alpha_2=\sigma_2$.

\vspace{0.5cm}

We now prove that $\rho$ is unfaithful in the case $n=3$ by specifying an element in its kernel.

\begin{theorem}
The complex specialization of the representation $\;\rho:C_3 \mapsto GL_3(\mathbb{Z}[q^{\pm1}])$ is unfaithful.
\end{theorem}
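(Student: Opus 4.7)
My plan is to exhibit an explicit nontrivial element of $C_3$ that is killed by $\rho$; producing even one such element immediately gives unfaithfulness. The pivotal step is a short matrix computation: multiplying out the two $3\times 3$ matrices defining $\rho(\sigma_1)$ and $\rho(\sigma_2)$, the cross terms collapse and one is left with
$$\rho(\sigma_1 \sigma_2) \;=\; q^2 \begin{pmatrix} 0 & 1 & 0 \\ 0 & 0 & 1 \\ 1 & 0 & 0 \end{pmatrix} \;=\; q^2\, \rho(\alpha_1 \alpha_2).$$
So $\rho(\sigma_1 \sigma_2)$ is nothing but the scalar $q^2$ times the permutation matrix $\rho(\alpha_1 \alpha_2)$.

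Once this scalar identity is in hand, the kernel element falls out with no further computation: a scalar times a matrix automatically commutes with that matrix, so $\rho(\sigma_1\sigma_2)$ and $\rho(\alpha_1\alpha_2)$ commute in $GL_3(\mathbb{C})$. Consequently the commutator
$$w \;=\; [\sigma_1\sigma_2,\; \alpha_1\alpha_2] \;=\; \sigma_1\sigma_2\, \alpha_1\alpha_2\, \sigma_2^{-1}\sigma_1^{-1}\, \alpha_2\alpha_1$$
satisfies $\rho(w)=I$: the factors $q^2$ and $q^{-2}$ cancel, and the two copies of the cyclic permutation matrix cancel with their inverses. This works for every nonzero complex value of $q$, so no root-of-unity hypothesis on $q$ is required.

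The one point where I expect real care to be needed --- and the only place where the argument could in principle fail --- is verifying that $w$ is nontrivial in $C_3$. Here I would use the embedding $C_3 \hookrightarrow \operatorname{Aut}(\mathbb{F}_3)$ and evaluate $w$ on a single free generator. Reading $w$ from right to left and applying $\sigma_i(x_i)=x_i^{-1}x_{i+1}x_i$, $\sigma_i(x_{i+1})=x_i$, and the obvious swaps $\alpha_i(x_i)=x_{i+1}$ at each stage, the intermediate words collapse cleanly (almost every intermediate step is a single generator or a short conjugate), and one arrives at
$$w(x_3) \;=\; x_1^{-1}\, x_3\, x_1,$$
which is visibly not equal to $x_3$ in the free group $\mathbb{F}_3$. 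Hence $w$ is a nontrivial automorphism of $\mathbb{F}_3$, so a nontrivial element of $C_3$, and $\rho$ is unfaithful.
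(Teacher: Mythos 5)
Your argument is correct, and it is a cleaner, distilled version of what the paper does. The paper also exhibits an explicit kernel element, namely $v=\sigma_2^{-1}\alpha_2\alpha_1\sigma_2^{-1}\alpha_2\alpha_1\sigma_2\alpha_1\alpha_2\sigma_2\alpha_1\alpha_2$, verifies $\rho(v)=I_3$ by brute-force matrix multiplication, and proves nontriviality by a chain of relations in $C_3$ reducing $v=1$ to the equality $\sigma_1\sigma_2\alpha_1\alpha_2=\alpha_1\alpha_2\sigma_1\sigma_2$, which it then refutes by an informal braid-picture argument. So the paper's element is nontrivial for exactly the same reason as yours: $\sigma_1\sigma_2$ and $\alpha_1\alpha_2$ do not commute in $C_3$, and your $w=[\sigma_1\sigma_2,\alpha_1\alpha_2]$ is precisely that commutator. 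What your route buys is twofold: the identity $\rho(\sigma_1)\rho(\sigma_2)=q^2\rho(\alpha_1)\rho(\alpha_2)$ (which I checked and is correct for every $q\in\mathbb{C}^*$) makes $\rho(w)=I_3$ immediate and conceptual rather than a blind computation, and checking nontriviality inside $\operatorname{Aut}(\mathbb{F}_3)$ is more rigorous than the paper's appeal to geometric shapes, since elements of $C_n$ are not honest braids. One detail to fix: your claimed value $w(x_3)=x_1^{-1}x_3x_1$ is not what the computation gives. With the convention $\sigma_i(x_i)=x_i^{-1}x_{i+1}x_i$, $\sigma_i(x_{i+1})=x_i$ and left action, one finds $w(x_3)=x_1^{-1}x_2x_3x_2^{-1}x_1$ (and with other standard conventions one may even get $w(x_3)=x_3$, forcing you to test a different generator). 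The conclusion is unaffected, and the cleanest fix is to verify non-commutation directly on one generator, e.g. $\sigma_1\sigma_2\alpha_1\alpha_2(x_1)$ and $\alpha_1\alpha_2\sigma_1\sigma_2(x_1)$ are distinct conjugates of $x_3$ in $\mathbb{F}_3$, so $w\neq 1$; but do state the convention you use and carry it through consistently.
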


\begin{proof}
Fix $q\in \mathbb{C}^*$ and consider the word $v=\sigma_2^{-1}\alpha_2\alpha_1\sigma_2^{-1}\alpha_2\alpha_1\sigma_2\alpha_1\alpha_2\sigma_2\alpha_1\alpha_2$. Suppose that $v$ is a trivial word, that is $v=id_{C_3}$. Then, we have\\
$v=id_{C_3}\iff \sigma_2^{-1}\alpha_2\alpha_1\sigma_2^{-1}\alpha_2\alpha_1\sigma_2\alpha_1\alpha_2\sigma_2\alpha_1\alpha_2=id_{C_3}\\
\hspace*{1.3cm}\iff \sigma_2\alpha_1\alpha_2\sigma_2\alpha_1\alpha_2=(\sigma_2^{-1}\alpha_2\alpha_1\sigma_2^{-1}\alpha_2\alpha_1)^{-1}\\
\hspace*{1.3cm}\iff \sigma_2\alpha_1\alpha_2\sigma_2\alpha_1\alpha_2=\alpha_1\alpha_2\sigma_2\alpha_1\alpha_2\sigma_2 \hspace{1.5cm} (\alpha_1^{-1}=\alpha_1, \alpha_2^{-1}=\alpha_2)\\
\hspace*{1.3cm}\iff \alpha_1\alpha_2\sigma_1 \sigma_2\alpha_1\alpha_2=\alpha_1\alpha_2\sigma_2\alpha_1\alpha_2\sigma_2 \hspace{1.5cm} (\alpha_1\alpha_2\sigma_1=\sigma_2\alpha_1\alpha_2)\\
\hspace*{1.3cm}\iff \sigma_1 \sigma_2\alpha_1\alpha_2=\sigma_2\alpha_1\alpha_2\sigma_2 \\
\hspace*{1.3cm}\iff \sigma_1 \sigma_2\alpha_1\alpha_2=\alpha_1\alpha_2\sigma_1\sigma_2 \hspace{3cm} (\alpha_1\alpha_2\sigma_1=\sigma_2\alpha_1\alpha_2)\\
\hspace*{1.3cm}\iff$ The geometrical shapes of $\sigma_1\sigma_2\alpha_1\alpha_2$ and $\alpha_1\alpha_2\sigma_1\sigma_2$ are the same\\
\hspace*{2.2cm} (look Figure 1).
\begin{figure}[h]
\centering
\includegraphics[scale=0.5]{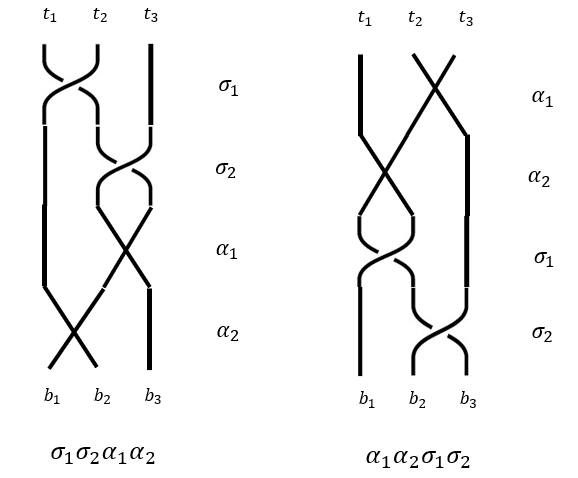}
\caption{}
\end{figure}

But we can see that the geometrical shapes of $\sigma_1\sigma_2\alpha_1\alpha_2$ and $\alpha_1\alpha_2\sigma_1\sigma_2$ in Figure 1 are not the same. Indeed, In the left hand side braid, $\sigma_1\sigma_2\alpha_1\alpha_2$, we see that the string that connect the bottom node $b_1$ to the top node $t_1$ is below the string that connect the bottom node $b_3$ to the top node $t_3$. While in the right hand side braid, $\alpha_1\alpha_2\sigma_1\sigma_2$, the string that connect the bottom node $b_1$ to the top node $t_1$ is above the string that connect the bottom node $b_3$ to the top node $t_3$. Therefore, the two words $\sigma_1\sigma_2\alpha_1\alpha_2$ and $\alpha_1\alpha_2\sigma_1\sigma_2$ have different geometrical shapes and so $\sigma_1 \sigma_2\alpha_1\alpha_2\neq \alpha_1\alpha_2\sigma_1\sigma_2$, which is a contradiction. Thus, $v$ is not trivial word.

Now, direct computations show that $\rho(v)=\rho(\sigma_2^{-1}\alpha_2\alpha_1\sigma_2^{-1}\alpha_2\alpha_1\sigma_2\alpha_1\alpha_2\sigma_2\alpha_1\alpha_2)=I_3$. This implies that $v\in \ker(\rho)$ and so $\rho$ is unfaithful.
\end{proof}

Now, we consider the representation $\rho$ for $n=4$.

\begin{definition}
Consider the representation $\;\rho:C_4 \mapsto GL_6(\mathbb{Z}[q^{\pm1}])$ and specialize $q$ to a non zero complex number. The complex specialization of $\rho $ is defined by the actions of $\sigma_1$, $\sigma_2$, $\sigma_3$, $\alpha_1$, $\alpha_2$ and $\alpha_3$ on the standard unit vectors  $\{e_1,e_2,e_3,e_4,e_5,e_6\}$ of $\mathbb C^6$ as follows:\vspace{0.3cm} \\
$$\sigma_1 \mapsto \left\{\begin{array}{l}
e_1\mapsto q^2e_1\\
e_2\mapsto q(q-1)e_1+(1-q)e_2+qe_4\\
e_3\mapsto q(q-1)e_1+(1-q)e_3+qe_5\\
e_4 \mapsto e_2\\
e_5 \mapsto e_3\\
e_6 \mapsto e_6
\end{array}\right., \hspace{0.4cm} \sigma_2 \mapsto \left\{\begin{array}{l}
e_1\mapsto (1-q)e_1+qe_2+q(q-1)e_4\\
e_2\mapsto e_1\\
e_3\mapsto e_3\\
e_4\mapsto q^2e_4\\
e_5\mapsto q(q-1)e_4+(1-q)e_5+qe_6\\
e_6\mapsto e_5
\end{array}\right.,$$
\\
$$\sigma_3 \mapsto \left\{\begin{array}{l}
e_1\mapsto e_1\\
e_2\mapsto (1-q)e_2+qe_3+q(q-1)e_6\\
e_3\mapsto e_2\\
e_4\mapsto (1-q)e_4+qe_5+q(q-1)e_6\\
e_5\mapsto e_4\\
e_6\mapsto q^2e_6
\end{array}\right., \hspace{0.4cm} \alpha_1 \mapsto \left\{\begin{array}{l}
e_1\mapsto e_1\\
e_2\mapsto e_4\\
e_3\mapsto e_5\\
e_4\mapsto e_2\\
e_5\mapsto e_3\\
e_6 \mapsto e_6
\end{array}\right.$$
\\
$$
\alpha_2 \mapsto \left\{\begin{array}{l}
e_1\mapsto e_2\\
e_2\mapsto e_1\\
e_3\mapsto e_3\\
e_4\mapsto e_4\\
e_5 \mapsto e_6\\
e_6 \mapsto e_5
\end{array}\right.,\hspace{0.4cm} \text{and} \hspace{0.4cm} \alpha_3 \mapsto \left\{\begin{array}{l}
e_1\mapsto e_1\\
e_2\mapsto e_3\\
e_3\mapsto e_2\\
e_4\mapsto e_5\\
e_5 \mapsto e_4\\
e_6 \mapsto e_6
\end{array}\right..$$

\vspace{0.5cm}
In other words, for $n=4$, the representation $\rho$ is given by 
$$\rho:C_4 \mapsto GL(\mathbb{C}^6)$$
$$\rho(\sigma_1)=
\left( \begin{array}{@{}c@{}}
\begin{matrix}
   		q^2 & q(q-1) & q(q-1) & 0 & 0 & 0\\
    	0 & 1-q & 0 & 1 & 0 & 0 \\
        0 & 0 & 1-q & 0 & 1 & 0 \\
        0 & q & 0 & 0 & 0 & 0 \\
        0 & 0 & q & 0 & 0 & 0 \\
        0 & 0 & 0 & 0 & 0 & 1 \\
\end{matrix}
\end{array} \right), \hspace{0.5cm}
\rho(\sigma_2)=
\left( \begin{array}{@{}c@{}}
\begin{matrix}
   		1-q & 1 & 0 & 0 & 0 & 0 \\
    	q & 0 & 0 & 0 & 0 & 0 \\
        0 & 0 & 1 & 0 & 0 & 0 \\
        q(q-1) & 0 & 0 & q^2 & q(q-1) & 0 \\
        0 & 0 & 0 & 0 & 1-q & 1 \\
        0 & 0 & 0 & 0 & q & 0 \\
\end{matrix}
\end{array} \right),$$

$$\rho(\sigma_3)=
\left( \begin{array}{@{}c@{}}
\begin{matrix}
   		1 & 0 & 0 & 0 & 0 & 0\\
    	0 & 1-q & 1 & 0 & 0 & 0 \\
        0 & q & 0 & 0 & 0 & 0 \\
        0 & 0 & 0 & 1-q & 1 & 0 \\
        0 & 0 & 0 & q & 0 & 0 \\
        0 & q(q-1) & 0 & q(q-1) & 0 & q^2 \\
\end{matrix}
\end{array} \right), \hspace{0.5cm}
\rho(\alpha_1)=
\left( \begin{array}{@{}c@{}}
\begin{matrix}
   		1 & 0 & 0 & 0 & 0 & 0 \\
    	0 & 0 & 0 & 1 & 0 & 0 \\
        0 & 0 & 0 & 0 & 1 & 0 \\
        0 & 1 & 0 & 0 & 0 & 0 \\
        0 & 0 & 1 & 0 & 0 & 0 \\
        0 & 0 & 0 & 0 & 0 & 1 \\
\end{matrix}
\end{array} \right),$$
\\
$$\rho(\alpha_2)=
\left( \begin{array}{@{}c@{}}
\begin{matrix}
   		0 & 1 & 0 & 0 & 0 & 0 \\
    	1 & 0 & 0 & 0 & 0 & 0 \\
        0 & 0 & 1 & 0 & 0 & 0 \\
        0 & 0 & 0 & 1 & 0 & 0 \\
        0 & 0 & 0 & 0 & 0 & 1 \\
        0 & 0 & 0 & 0 & 1 & 0 \\
\end{matrix}
\end{array} \right) \hspace{0.5cm}\text{and} \hspace{0.5cm}
\rho(\alpha_3)=
\left( \begin{array}{@{}c@{}}
\begin{matrix}
   		1 & 0 & 0 & 0 & 0 & 0 \\
    	0 & 0 & 1 & 0 & 0 & 0 \\
        0 & 1 & 0 & 0 & 0 & 0 \\
        0 & 0 & 0 & 0 & 1 & 0 \\
        0 & 0 & 0 & 1 & 0 & 0 \\
        0 & 0 & 0 & 0 & 0 & 1 \\
\end{matrix}
\end{array} \right).$$
\end{definition}

\vspace*{0.5cm}

Notice that $q\neq 1$ since otherwise we get $\alpha_1=\sigma_1$, $\alpha_2=\sigma_2$ and $\alpha_3=\sigma_3$.
\vspace*{0.5cm}

We now prove that $\rho$ is unfaithful in the case $n=4$ by specifying an element in its kernel.

\begin{theorem}
The complex specialization of the representation $\;\rho:C_4 \mapsto GL_6(\mathbb{Z}[q^{\pm1}])$ is unfaithful.
\end{theorem}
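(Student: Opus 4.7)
The plan is to mirror the strategy of Theorem 3: exhibit an explicit word $w\in C_4$, show it is non-trivial, and verify by direct matrix computation that $\rho(w)=I_6$.

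For the candidate element, the first natural attempt is to reuse the word from the $n=3$ case,
$$v=\sigma_2^{-1}\alpha_2\alpha_1\sigma_2^{-1}\alpha_2\alpha_1\sigma_2\alpha_1\alpha_2\sigma_2\alpha_1\alpha_2,$$
now regarded as an element of $C_4$ via the canonical embedding $C_3\hookrightarrow C_4$ obtained by extending every automorphism of $\mathbb{F}_3$ to an automorphism of $\mathbb{F}_4$ fixing $x_4$. Under this embedding the generators $\sigma_1,\sigma_2,\alpha_1,\alpha_2$ of $C_3$ map to the like-named generators of $C_4$, so $v\in C_4$ reduces to the element of $C_3$ already studied; being non-trivial in $C_3$ by Theorem 3, it is also non-trivial in $C_4$. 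The whole problem then collapses to verifying $\rho(v)=I_6$ with the $6\times 6$ matrices of Definition 4.

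To organize that computation I would exploit the following block structure. Since $v$ uses only $\sigma_2,\alpha_1,\alpha_2$, the subspace $U=\mathrm{span}(e_1,e_2,e_4)$, corresponding to the basis vectors $v_{1,2},v_{1,3},v_{2,3}$ not involving strand $4$, is $\rho(v)$-invariant, and the restricted representation on $U$ coincides precisely with the $n=3$ specialization. Theorem 3 therefore supplies $\rho(v)|_U=I_3$ for free, leaving only the action on $e_3,e_5,e_6=v_{1,4},v_{2,4},v_{3,4}$ to be checked. This reduces to a $3\times 3$ block computation, which I would carry out by tracking $\rho(v)(e_k)$ for $k=3,5,6$ through the twelve generator matrices appearing in $v$.

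The main obstacle is that, while $U$ is invariant, the complementary space $\mathrm{span}(e_3,e_5,e_6)$ is not: the entry $\sigma_2(e_5)=q(q-1)e_4+(1-q)e_5+qe_6$ leaks into $U$, so a priori $\rho(v)$ could pick up a residual $U$-component on these vectors that one must show cancels. Should this first candidate fail, the backup plan is to correct $v$ by multiplying (or conjugating, or commuting) it with an auxiliary element that is trivial on $U$ but adjusts the action on $\mathrm{span}(e_3,e_5,e_6)$; a natural such element is the shifted analogue
$$v'=\sigma_3^{-1}\alpha_3\alpha_2\sigma_3^{-1}\alpha_3\alpha_2\sigma_3\alpha_2\alpha_3\sigma_3\alpha_2\alpha_3,$$
whose non-triviality is again reduced to Theorem 3 by restricting to the $C_3$-subgroup generated by $\sigma_3,\alpha_3,\alpha_2$ and applying the braid-diagram argument of Theorem 3 verbatim after relabeling strands.
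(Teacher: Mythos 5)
Your overall strategy (produce a nontrivial word and check that $\rho$ kills it) is indeed the paper's strategy, and even your idea of looking inside the embedded copy of $C_3$ is consistent with it: the paper's actual witness $w=\sigma_1\alpha_1\alpha_2\alpha_1\sigma_1^{-1}\alpha_2\alpha_1\sigma_1^{-1}\alpha_2\sigma_1\alpha_1\alpha_2$ also involves only $\sigma_1,\alpha_1,\alpha_2$. However, your concrete plan has a genuine gap: the candidate $v$ from Theorem 3 is \emph{not} in $\ker\rho$ for $n=4$, so the computation you defer to would fail. You are right that $U=\mathrm{span}(e_1,e_2,e_4)$ is invariant under $\rho(\sigma_1),\rho(\sigma_2),\rho(\alpha_1),\rho(\alpha_2)$ and that the restriction to $U$ is the $n=3$ specialization; but the obstruction is not the off-diagonal ``leak'' into $U$ that you flag. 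It is the induced action on the quotient spanned by the classes of $e_3,e_5,e_6$ (i.e.\ $v_{1,4},v_{2,4},v_{3,4}$), where $\sigma_1,\sigma_2$ act by unreduced Burau-type matrices and $\alpha_1,\alpha_2$ by permutation matrices of $S_3$; there $v$ does not map to the identity. Concretely, the chain of equivalences in the proof of Theorem 3 uses only relations valid in $C_4$ (e.g.\ $\alpha_i^2=1$ and $\sigma_2\alpha_1\alpha_2=\alpha_1\alpha_2\sigma_1$), hence is preserved by $\rho$, and shows that $\rho(v)=I_6$ would force $\rho(\sigma_1\sigma_2\alpha_1\alpha_2)=\rho(\alpha_1\alpha_2\sigma_1\sigma_2)$. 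But with the matrices of Definition 4,
\begin{align*}
\rho(\sigma_1\sigma_2\alpha_1\alpha_2)(e_3)&=q(q-1)e_2+(1-q)e_3+qe_6,\\
\rho(\alpha_1\alpha_2\sigma_1\sigma_2)(e_3)&=q(q-1)e_4+(1-q)e_5+qe_6,
\end{align*}
which differ for every $q\neq 1$. Hence $\rho(v)\neq I_6$ for all admissible $q$: membership in the kernel does not transfer along $C_3\hookrightarrow C_4$, precisely because the new basis vectors $v_{i,4}$ detect $v$.

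The backup plan does not repair this. Its premise that $v'$ is ``trivial on $U$'' is false ($\rho(\sigma_3)$ does not even preserve $U$, since $\rho(\sigma_3)(e_2)=(1-q)e_2+qe_3+q(q-1)e_6$), and, more importantly, no combination of $v$ with $v'$ is verified, or even argued, to lie in the kernel; ``correct $v$ by an auxiliary element'' is a search heuristic, not a proof. Since the entire content of the theorem is the exhibition of one verified kernel element, the proposal as written does not establish the statement. The paper supplies exactly the missing ingredients: it writes down the different word $w$ above, proves $w\neq 1$ in $C_4$ by reducing it via the $C_4$ relations to the inequality $\sigma_1\sigma_2\alpha_1\neq\alpha_2\sigma_1\sigma_2$ (seen on diagrams), and then checks $\rho(w)=I_6$ by direct matrix computation.
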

\begin{proof}
Fix $q\in \mathbb{C}^*$ and consider the word $w=\sigma_1\alpha_1\alpha_2\alpha_1\sigma_1^{-1}\alpha_2\alpha_1\sigma_1^{-1}\alpha_2\sigma_1\alpha_1\alpha_2$. Suppose that $w$ is a trivial word, that is $w=id_{C_4}$. Then, we have\\
$w=id_{C_4}\iff \sigma_1\alpha_1\alpha_2\alpha_1\sigma_1^{-1}\alpha_2\alpha_1\sigma_1^{-1}\alpha_2\sigma_1\alpha_1\alpha_2=id_{C_4}\\
\hspace*{1.37cm}\iff \sigma_1\alpha_2\alpha_1\alpha_2\sigma_1^{-1}\alpha_2\alpha_1\sigma_1^{-1}\alpha_2\sigma_1\alpha_1\alpha_2=id_{C_4} \hspace{0.78cm} (\alpha_1\alpha_2\alpha_1=\alpha_2\alpha_1\alpha_2)\\ \hspace*{1.37cm}\iff \alpha_2\alpha_1\sigma_2\alpha_2\sigma_1^{-1}\alpha_2\alpha_1\sigma_1^{-1}\alpha_2\sigma_1\alpha_1\alpha_2=id_{C_4} \hspace{0.78cm} (\sigma_1\alpha_1\alpha_2=\alpha_2\alpha_1\sigma_2)\\ \hspace*{1.37cm}\iff \sigma_2\alpha_2\sigma_1^{-1}\alpha_2\alpha_1\sigma_1^{-1}\alpha_2\sigma_1=id_{C_4} \hspace{2.3cm} (\alpha_1^{-1}=\alpha_1, \alpha_2^{-1}=\alpha_2)\\ \hspace*{1.37cm}\iff \sigma_2\alpha_2\sigma_1^{-1}\alpha_2\alpha_1=(\sigma_1^{-1}\alpha_2\sigma_1)^{-1}\\ \hspace*{1.37cm}\iff \sigma_2\alpha_2\sigma_1^{-1}\alpha_2\alpha_1=\sigma_1^{-1}\alpha_2\sigma_1 \hspace{2.93cm} (\alpha_2^{-1}=\alpha_2)\\ \hspace*{1.37cm}\iff \sigma_1\sigma_2\alpha_2\sigma_1^{-1}\alpha_2\alpha_1=\alpha_2\sigma_1\\ \hspace*{1.37cm}\iff \sigma_1\sigma_2\alpha_2=\alpha_2\sigma_1\alpha_1\alpha_2\sigma_1 \hspace{3.35cm} (\alpha_1^{-1}=\alpha_1,\alpha_2^{-1}=\alpha_2)\\ \hspace*{1.37cm}\iff \sigma_1\sigma_2\alpha_2\alpha_2\alpha_1=\alpha_2\sigma_1\alpha_1\alpha_2\sigma_1\alpha_2\alpha_1\\ \hspace*{1.37cm}\iff \sigma_1\sigma_2\alpha_1=\alpha_2\sigma_1\sigma_2 \hspace{4.1cm} (\alpha_1\alpha_2\sigma_1\alpha_2\alpha_1=\sigma_2)\\ \hspace*{1.37cm}\iff
$ The geometrical shapes of $\sigma_1\sigma_2\alpha_1$ and $\alpha_2\sigma_1\sigma_2$ are the same\\
\hspace*{2.2cm} (look Figure 2).
\begin{figure}[h]
\centering
\includegraphics[scale=0.5]{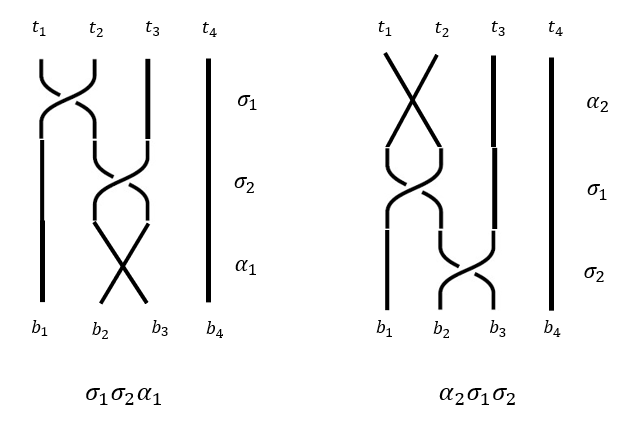}
\caption{}
\end{figure}

But we can see that the geometrical shapes of $\sigma_1\sigma_2\alpha_1$ and $\alpha_2\sigma_1\sigma_2$ in Figure 2 are not the same, since the bottom node $b_1$ in the left hand side braid, $\sigma_1\sigma_2\alpha_1$, has 1 string connected to the top node $t_2$ and no strings connected to the node $t_1$, while in the right hand side braid, $\alpha_2\sigma_1\sigma_2$, the bottom node $b_1$ has two strings connected to the top nodes $t_1$ and $t_2$ each. Therefore, the two words $\sigma_1\sigma_2\alpha_1$ and $\alpha_2\sigma_1\sigma_2$ have different geometrical shapes and so $\sigma_1\sigma_2\alpha_1\neq \alpha_2\sigma_1\sigma_2$, which is a contradiction. Thus, $w$ is not trivial word.

Now, direct computations show that $\rho(w)=\sigma_1\alpha_1\alpha_2\alpha_1\sigma_1^{-1}\alpha_2\alpha_1\sigma_1^{-1}\alpha_2\sigma_1\alpha_1\alpha_2)=I_6$. This implies that $w\in \ker(\rho)$ and so $\rho$ is unfaithful.
\end{proof}


\end{document}